\newtheorem{theorem}{Theorem}[section]
\newtheorem{lemma}[theorem]{Lemma}
\newtheorem{cor}[theorem]{Corollary}
\newtheorem{proposition}[theorem]{Proposition}
\newtheorem{conj}[theorem]{Conjecture}
\theoremstyle{remark}
\newtheorem{remark}[theorem]{Remark}
\numberwithin{equation}{section}
\begin{document}
\baselineskip=15pt

\title{On an operator Kantorovich inequality for positive linear maps}

\author{Minghua Lin}

\date{}

\maketitle

\begin{abstract}
\noindent We improve the operator Kantorovich inequality as follows: Let $A$ be a positive operator on a Hilbert space with $0<m\le A \le M$. Then for every unital positive  linear map $\Phi$, \[\Phi(A^{-1})^2\le \left(\frac{(M+m)^2}{4Mm}\right)^2\Phi(A)^{-2}.\]
As a consequence, \[\Phi(A^{-1})\Phi(A)+\Phi(A)\Phi(A^{-1}) \le \frac{(M+m)^2}{2Mm}.\]

\end{abstract}

{\small\noindent
Keywords: Operator inequalities, Kantorovich inequality, Choi's inequality,  Schwarz inequality, Wielandt inequality, positive linear maps.

\noindent
AMS subjects classification 2010: 47A63, 47A30.}

\section{Introduction}
 As customary, we reserve $M, m$ for scalars and $I$ for the identity operator. Other capital letters denote general elements of the $C^*$ algebra $\mathcal {B}(\mathcal {H})$ (with unit) of all bounded linear operator acting on a Hilbert space $(\mathcal {H}, \langle \cdot, \cdot\rangle)$. Also, we identify a scalar with the unit multiplied by this scalar.  The operator norm is denoted by $\|\cdot\|$.  We write  $T\ge 0$ to mean that the operator $T$ is positive and identify $T\ge S$ (the same as $S\le T$) with $T-S\ge 0$. A positive invertible operator $T$ is naturally denoted by $T>0$. Finally, the absolute value of $T$ is denoted by $|T|=(T^*T)^{1/2}$, where $T^*$ stands for the adjoint of $T$.

The Kantorovich inequality \cite{Kan48} (see also \cite[p. 444]{HJ85}) states that
for every unit vector $x\in \mathcal {H}$,  \begin{eqnarray}\label{e1}  \langle x, Ax\rangle\langle x, A^{-1}x\rangle \le  \frac{(M+m)^2}{4Mm}.
\end{eqnarray}

Operator version of (\ref{e1}) was firstly established by Marshall \& Olkin \cite{MO90}, who obtained

\begin{theorem}\label{thm1}\cite{MO90} Let  $0<m \le A \le M$.  Then for every  positive unital linear map $\Phi$, \begin{eqnarray}\label{op-kantorovich1}  \Phi(A^{-1})\le  \frac{(M+m)^2}{4Mm} \Phi(A)^{-1}.
\end{eqnarray}
  \end{theorem}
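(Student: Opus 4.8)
The plan is to reduce the operator statement to two elementary scalar inequalities, each lifted to operators through the functional calculus, and then to chain them by transitivity of the order $\le$. First I would record the chord estimate for the convex function $t\mapsto t^{-1}$ on the interval $[m,M]$: since $1/t$ is convex, on $[m,M]$ its graph lies below the chord joining $(m,1/m)$ and $(M,1/M)$, which gives the scalar inequality $t^{-1}\le \frac{M+m-t}{Mm}$ for every $t\in[m,M]$. Because $0<m\le A\le M$, the spectrum of $A$ lies in $[m,M]$, so applying the functional calculus to $A$ yields the operator inequality
\[
A^{-1}\le \frac{(M+m)I-A}{Mm}.
\]

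Next I would apply the positive unital linear map $\Phi$. Positivity preserves the order $\le$, while linearity together with $\Phi(I)=I$ turns the right-hand side into an affine expression in $\Phi(A)$, giving the intermediate estimate
\[
\Phi(A^{-1})\le \frac{(M+m)I-\Phi(A)}{Mm}.
\]
At this point I would also observe that applying $\Phi$ to $mI\le A\le MI$ and using unitality yields $mI\le \Phi(A)\le MI$; in particular $B:=\Phi(A)$ is positive and invertible, so $B^{-1}$ is well defined and $\sigma(B)\subseteq[m,M]\subseteq(0,\infty)$.

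It then remains to dominate the affine bound by the target $\frac{(M+m)^2}{4Mm}B^{-1}$, and the key point is that this is an inequality between two functions of the \emph{single} operator $B$, so it reduces again to a scalar inequality on $\sigma(B)$ with no noncommutativity entering. Concretely, for every $s>0$ the perfect-square identity $\frac{(M+m)^2}{4}-\big((M+m)s-s^2\big)=\left(s-\frac{M+m}{2}\right)^2\ge 0$ gives $(M+m)-s\le \frac{(M+m)^2}{4s}$, and feeding $B$ into the functional calculus yields
\[
\frac{(M+m)I-B}{Mm}\le \frac{(M+m)^2}{4Mm}B^{-1}.
\]
Combining this with the intermediate estimate by transitivity produces the claimed bound $\Phi(A^{-1})\le \frac{(M+m)^2}{4Mm}\Phi(A)^{-1}$. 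I expect the only genuine subtlety, rather than a real obstacle, to be bookkeeping about which operator each functional-calculus step is applied to: the first estimate concerns $A$ before $\Phi$ is applied, whereas the second concerns the single operator $B=\Phi(A)$. Keeping these separate is exactly what lets the argument avoid the usual failure of monotonicity for operator products, since every relation used is an order comparison between functions of one self-adjoint operator, where scalar inequalities lift verbatim.
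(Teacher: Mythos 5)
Your argument is correct and complete, but note that the paper itself gives no proof of this theorem: it is quoted from Marshall and Olkin \cite{MO90}. Still, your proof overlaps substantially with material the paper does contain. Your chord bound $t^{-1}\le \frac{M+m-t}{Mm}$ on $[m,M]$ is, after clearing denominators, exactly the factorization $(M-A)(m-A)A^{-1}\le 0$ that the paper uses in Section 2 to derive $Mm\Phi(A^{-1})+\Phi(A)\le M+m$, namely (\ref{e3}); your intermediate estimate is precisely (\ref{e3}) divided by $Mm$. What you add is the second scalar step $(M+m)-s\le \frac{(M+m)^2}{4s}$ for $s>0$, lifted through the functional calculus of the single operator $B=\Phi(A)$, which converts the affine bound into the multiplicative Kantorovich form; the paper instead pushes (\ref{e3}) in other directions (via AM--GM to obtain (\ref{op-kantorovich2}), and via the Bhatia--Kittaneh norm lemma to obtain the squared version (\ref{op-kantorovich3b})). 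Your bookkeeping is sound: each lifting is an order comparison between two functions of a single self-adjoint operator ($A$ in the first step, $B$ in the second), the perfect-square identity verifies the scalar inequality, and positivity plus unitality of $\Phi$ justifies the middle step, so no issue of operator monotonicity or noncommutativity arises. In short, you have reconstructed a self-contained version of the classical proof of (\ref{op-kantorovich1}), and its first half is exactly the mechanism the paper reuses for its main theorem.
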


The operator Kantorovich inequality (\ref{op-kantorovich1}) can be regarded as a counterpart to Choi's inequality (see e.g. \cite[p. 41]{Bha07}), which says \begin{eqnarray}\label{choi}  \Phi(A^{-1})\ge  \Phi(A)^{-1},
\end{eqnarray} for every  positive unital linear map $\Phi$ and $A>0$.

There is considerable amount of literature devoting to the study of Kantorovich inequality, we refer to \cite{Mos12} for a recent survey and references therein. This paper intends to give an improvement of operator Kantorovich inequality (\ref{op-kantorovich1}) based on the following consideration:

~~

\noindent{\it It is well known that $t^{s}$ $(0\leq s\leq 1)$ is an operator monotone function  and not so is $t^2$.}

~~

 The main result is given in the next section. In the last section, we present some discussion and a conjecture.

\section{Main results}
For $A, B>0$, the geometric mean $A\sharp B$ is defined by \[A\sharp B=A^{1/2}(A^{-1/2}BA^{-1/2})^{1/2}A^{1/2}.\] Observing that the geometric mean is the unique positive solution to the Ricatti equation $XA^{-1}X=B$, we have $A\sharp B=B\sharp A$. One motivation of such a notion is of course the AM-GM inequality  \begin{eqnarray}\label{am-gm}  \frac{A+B}{2}\ge A\sharp B.\end{eqnarray} A remarkable property of geometric mean is the so called maximal characterization \cite{PW75}, which says that $\begin{bmatrix} A & A\sharp B \\ A\sharp B  &  B\end{bmatrix}$ is positive, and moreover, if the operator matrix $\begin{bmatrix} A &  X  \\ X  &  B\end{bmatrix}$, with $X$ being self-adjoint, is positive, then $A\sharp B\ge X$. Here $2\times 2$ operator matrix is naturally understood as an operator acting on $\mathcal {H}\oplus \mathcal {H}$.
It is well known that geometric mean is monotone, i.e., if $A\ge C>0$ and   $B\ge D>0$, then  $A\sharp B\ge C\sharp D$. The next property may be known as well.

\begin{proposition}\label{prop0} If $0< A\le B$, then $A\sharp B^{-1}\le I$. The converse is not true.  \end{proposition}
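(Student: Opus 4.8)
The plan is to treat the two assertions separately, and to get the positive statement directly from the monotonicity of the geometric mean recorded above. Since $0<A\le B$ and inversion is operator monotone decreasing, we have $B^{-1}\le A^{-1}$. Holding the first argument fixed and applying the stated monotonicity in the second argument (i.e. taking $A\ge A$ and $A^{-1}\ge B^{-1}$) gives $A\sharp B^{-1}\le A\sharp A^{-1}$. Finally a direct computation from the definition yields $A\sharp A^{-1}=A^{1/2}\bigl(A^{-1/2}A^{-1}A^{-1/2}\bigr)^{1/2}A^{1/2}=A^{1/2}A^{-1}A^{1/2}=I$, so $A\sharp B^{-1}\le I$, as claimed.

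For the converse I would first recast both inequalities as comparisons of a single positive operator, using congruence by $A^{-1/2}$ (which preserves order since $A^{-1/2}$ is invertible and self-adjoint). Conjugating $A\sharp B^{-1}\le I$ by $A^{-1/2}$ collapses the outer factors and gives the equivalent inequality $\bigl(A^{-1/2}B^{-1}A^{-1/2}\bigr)^{1/2}\le A^{-1}$. Likewise $A\le B$ is equivalent to $B^{-1}\le A^{-1}$, and the same congruence turns this into $A^{-1/2}B^{-1}A^{-1/2}\le A^{-2}$. Writing $C=A^{-1/2}B^{-1}A^{-1/2}>0$ and $T=A^{-1}>0$, the two conditions become
\[
A\sharp B^{-1}\le I \iff C^{1/2}\le T, \qquad A\le B \iff C\le T^{2}.
\]
Hence the converse would amount to the implication $C^{1/2}\le T \Rightarrow (C^{1/2})^{2}\le T^{2}$, that is, to the operator monotonicity of the square function.

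Since $t\mapsto t^{2}$ is not operator monotone — exactly the phenomenon flagged in the introduction — the converse must fail, and I would make this explicit. The correspondence is reversible: given any $T>0$ set $A=T^{-1}$, and given any $C>0$ set $B=A^{-1/2}C^{-1}A^{-1/2}>0$, so every pair $(C,T)$ of positive operators comes from an admissible pair $(A,B)$. It then suffices to feed in a standard witness to the non-monotonicity of squaring, e.g. $X=\begin{bmatrix}1&0\\0&0\end{bmatrix}+\varepsilon I$ and $T=X+\begin{bmatrix}1&1\\1&1\end{bmatrix}$, for which $X\le T$ but $T^{2}-X^{2}$ has a negative eigenvalue for small $\varepsilon>0$; taking $C=X^{2}$ yields $C^{1/2}=X\le T$ yet $C\not\le T^{2}$, i.e. $A\sharp B^{-1}\le I$ while $A\not\le B$.

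The step I expect to demand the most care is the reduction in the second paragraph: verifying that the congruence by $A^{-1/2}$ genuinely produces the clean forms $C^{1/2}\le T$ and $C\le T^{2}$, and that the map $(A,B)\leftrightarrow(C,T)$ is onto the positive pairs. Once that is in place the failure of the converse is forced by the non-monotonicity of $t^{2}$, and the only remaining bookkeeping is the harmless $\varepsilon$-perturbation ensuring strict positivity and invertibility throughout.
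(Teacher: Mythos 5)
Your proof of the forward implication coincides with the paper's: invert the order to get $B^{-1}\le A^{-1}$, apply monotonicity of the geometric mean in one argument, and compute $A\sharp A^{-1}=I$ from the definition. For the failure of the converse, however, you take a genuinely different route. The paper argues by contradiction via Ando's theorem (its reference [And87]) that $A\sharp B^{-1}\le I$ implies $A^{2}\sharp B^{-2}\le I$: if the converse held, then $0<A\le B$ would give $A\sharp B^{-1}\le I$ by the first part, hence $A^{2}\sharp (B^{2})^{-1}\le I$ by Ando, hence $A^{2}\le B^{2}$ by the assumed converse applied to the pair $(A^{2},B^{2})$ --- contradicting the failure of operator monotonicity of $t^{2}$. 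You instead recast both conditions by congruence with $A^{-1/2}$, obtaining $C^{1/2}\le T$ and $C\le T^{2}$ with $C=A^{-1/2}B^{-1}A^{-1/2}$, $T=A^{-1}$, check that $(A,B)\leftrightarrow(C,T)$ ranges over all pairs of positive invertible operators, and then plug in an explicit $2\times 2$ witness to the non-monotonicity of squaring; your witness is correct, since for $X_{0}=\begin{bmatrix}1&0\\0&0\end{bmatrix}$ and $T_{0}=X_{0}+\begin{bmatrix}1&1\\1&1\end{bmatrix}$ one has $T_{0}^{2}-X_{0}^{2}=\begin{bmatrix}4&3\\3&2\end{bmatrix}$, whose determinant is $-1$, and the $\varepsilon$-perturbation preserves this by continuity while making everything invertible. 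Both arguments ultimately rest on the same phenomenon (that $t\mapsto t^{2}$ is not operator monotone), but yours is self-contained and constructive --- it exhibits a concrete pair with $A\sharp B^{-1}\le I$ and $A\not\le B$ --- whereas the paper's is shorter at the price of invoking Ando's result as a black box and of being purely a proof by contradiction, yielding no explicit counterexample.
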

\begin{proof}  Operator reverse monotonicity of the inverse tells us $A^{-1}\ge B^{-1}>0$. Then $A\sharp B^{-1}\le A\sharp A^{-1}=I$. To see why the converse fails, suppose  $A\sharp B^{-1}\le I$, then  $A^2\sharp B^{-2}\le I$ (see \cite{And87}). If the converse is true, then $0< A\le B$ would imply $A^2\le B^2$, which is absurd.
\end{proof}

Thus the operator Kantorovich inequality (\ref{op-kantorovich1}) implies
\begin{cor}\label{g-kantorovich} Let  $0<m \le A \le M$.  Then for every  positive unital linear map $\Phi$, \begin{eqnarray}\label{op-kantorovich2}  \Phi(A^{-1})\sharp \Phi(A)\le  \frac{ M+m }{2\sqrt{Mm}}.
\end{eqnarray}
  \end{cor}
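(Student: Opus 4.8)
The plan is to obtain (\ref{op-kantorovich2}) as a direct consequence of Theorem \ref{thm1}, exploiting only two structural features of the geometric mean: its monotonicity and its behaviour under multiplication by a positive scalar. Abbreviate the Kantorovich constant by $k=\frac{(M+m)^2}{4Mm}$, so the claimed bound reads $\Phi(A^{-1})\sharp\Phi(A)\le\sqrt{k}=\frac{M+m}{2\sqrt{Mm}}$, and put $B=\Phi(A)$. Because $A^{-1}\ge M^{-1}>0$ and $\Phi$ is positive and unital, both $\Phi(A^{-1})\ge M^{-1}$ and $B\ge m$ are positive and invertible, so every geometric mean appearing below is well defined.

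First I would record the scalar homogeneity $(\lambda X)\sharp Y=\sqrt{\lambda}\,(X\sharp Y)$ for $\lambda>0$, which drops out of the defining formula $X^{1/2}(X^{-1/2}YX^{-1/2})^{1/2}X^{1/2}$ by pulling the scalar through the square roots, together with the elementary identity $B^{-1}\sharp B=I$. The corollary is then a one-line chain. Theorem \ref{thm1} gives $\Phi(A^{-1})\le k\,B^{-1}$; monotonicity of the geometric mean, applied in the first argument with $B$ held fixed in the second, yields $\Phi(A^{-1})\sharp B\le(k\,B^{-1})\sharp B$; and homogeneity followed by $B^{-1}\sharp B=I$ collapses the right-hand side, $(k\,B^{-1})\sharp B=\sqrt{k}\,(B^{-1}\sharp B)=\sqrt{k}$. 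Restoring $B=\Phi(A)$ produces exactly (\ref{op-kantorovich2}).

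I do not expect a genuine obstacle here: the content lies entirely in choosing the right tool. The one point deserving care is the homogeneity step, since it is precisely what converts the constant $k$ into its square root $\sqrt{k}$ — this is the gain the geometric-mean formulation buys, and it reflects the remark in the introduction that a monotone square-root-type operation behaves well whereas squaring does not. A self-contained alternative avoids homogeneity altogether: using $(X\sharp Y)^{-1}=X^{-1}\sharp Y^{-1}$, the desired bound is equivalent to $\Phi(A^{-1})^{-1}\sharp\Phi(A)^{-1}\ge k^{-1/2}$, and by the maximal characterization of $\sharp$ this reduces to the positivity of $\begin{bmatrix}\Phi(A^{-1})^{-1} & k^{-1/2} \\ k^{-1/2} & \Phi(A)^{-1}\end{bmatrix}$, which is in turn a Schur-complement restatement of Kantorovich's inequality $\Phi(A^{-1})\le k\,\Phi(A)^{-1}$.
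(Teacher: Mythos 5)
Your proof is correct and follows essentially the same route as the paper, which likewise deduces (\ref{op-kantorovich2}) directly from Theorem \ref{thm1} using basic properties of the geometric mean: the paper's Proposition \ref{prop0} (whose proof is exactly your monotonicity-plus-$X\sharp X^{-1}=I$ argument) combined with scalar homogeneity of $\sharp$. The only cosmetic difference is that the paper packages these steps as a separate proposition (and also sketches a second route via $Mm\Phi(A^{-1})+\Phi(A)\le M+m$ and AM--GM), while you unpack them inline.
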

\begin{remark} As pointed out by a referee, a more general case of (\ref{g-kantorovich}) has been studied in the context of  matrix reverse Cauchy-Schwarz inequality; see \cite[Theorem 4]{Lee09}.  \end{remark}

There is another way to approach (\ref{op-kantorovich2}). Note that \[(M-A)(m-A)A^{-1}\le 0,\] then \[Mm A^{-1}+A\le M+m,\]
and hence \begin{eqnarray}\label{e3}  Mm \Phi(A^{-1})+\Phi(A)\le M+m. \end{eqnarray}
Applying the AM-GM inequality  (\ref{am-gm}) to the left hand side of (\ref{e3}), we obtain (\ref{op-kantorovich2}).

Fujii et al. \cite{FINS97} proved that $t^2$ is order preserving in the following sense.
\begin{proposition}\label{prop1} Let $0\le A\le B$ and  $0<m \le A \le M$. Then \[A^2\le \frac{(M+m)^2}{4Mm}B^2.\]
\end{proposition}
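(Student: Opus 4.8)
The plan is to exploit the fact that although $t\mapsto t^2$ fails to be operator monotone, congruence $X\mapsto B^{-1}XB^{-1}$ \emph{does} preserve the operator order. Since $B\ge A\ge m>0$ guarantees that $B$ is invertible, the target inequality $A^2\le \frac{(M+m)^2}{4Mm}B^2$ is equivalent, after congruence by $B^{-1}$, to the cleaner statement
\[ B^{-1}A^2B^{-1}\le \frac{(M+m)^2}{4Mm}\,I. \]
So I would aim to bound $B^{-1}A^2B^{-1}$ from above by the Kantorovich constant times the identity, and then conjugate back by $B$ to finish.

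First I would record the scalar Kantorovich inequality in operator form. From $m\le A\le M$ the two commuting factors $M-A$ and $A-m$ are positive, so $(M-A)(A-m)\ge 0$, which rearranges to
\[ A^2\le (M+m)A-Mm\,I. \]
Conjugating this by $B^{-1}$ gives $B^{-1}A^2B^{-1}\le (M+m)B^{-1}AB^{-1}-Mm\,B^{-2}$.

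Next I would feed in the hypothesis $A\le B$ in the form $B^{-1}AB^{-1}\le B^{-1}$ (again a congruence by $B^{-1}$), which upon substitution yields
\[ B^{-1}A^2B^{-1}\le (M+m)B^{-1}-Mm\,B^{-2}. \]
The right-hand side is the concave quadratic $f(S)=(M+m)S-MmS^2$ evaluated at the self-adjoint operator $S=B^{-1}$. Since the parabola $f$ attains its \emph{global} maximum $\frac{(M+m)^2}{4Mm}$ at $S=\frac{M+m}{2Mm}$, the spectral calculus gives $f(S)\le \frac{(M+m)^2}{4Mm}I$ for every self-adjoint $S$ whatsoever; no spectral constraint is needed. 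Hence $B^{-1}A^2B^{-1}\le \frac{(M+m)^2}{4Mm}I$, and conjugating by $B$ recovers $A^2\le \frac{(M+m)^2}{4Mm}B^2$.

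The one point requiring care — and really the crux of the argument — is the very first reduction: one must resist the tempting but false inference $A\le B\Rightarrow A^2\le B^2$. The whole difficulty of squaring is absorbed by replacing that non-monotone step with two order-preserving congruences sandwiching the genuinely valid scalar bound $A^2\le (M+m)A-Mm\,I$. Everything after that is a routine parabola estimate, so I expect no further obstacle.
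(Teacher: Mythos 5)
Your proof is correct. Note, however, that the paper offers no proof of this proposition at all: it is quoted from Fujii, Izumino, Nakamoto and Seo \cite{FINS97}, so the comparison here is with the standard argument rather than with anything in the paper itself. Your two key ingredients --- the commuting-factor inequality $(M-A)(A-m)\ge 0$, i.e. $A^2\le (M+m)A-Mm\,I$, and the observation that the parabola $(M+m)t-Mmt^2$ is globally bounded by the Kantorovich constant $\frac{(M+m)^2}{4Mm}$ --- are exactly the ingredients of that standard proof. The difference is packaging: you transport everything into ``$B^{-1}$-coordinates'' by congruence and apply the parabola bound to $S=B^{-1}$, whereas the usual argument stays put and runs
\[ A^2 \le (M+m)A - Mm\,I \le (M+m)B - Mm\,I \le \frac{(M+m)^2}{4Mm}B^2, \]
where the middle step uses $A\le B$ on the linear term (squaring has already been eliminated) and the last step is the completed square $Mm\left(\frac{M+m}{2Mm}B-I\right)^2\ge 0$ --- the same parabola fact, applied to $B$ rather than $B^{-1}$ via functional calculus. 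So your two congruence steps are valid but dispensable; what they buy is conceptual transparency (every operation applied is manifestly order-preserving), at the cost of a slightly longer chain. The crux you identify --- replacing the false inference $A\le B\Rightarrow A^2\le B^2$ by the quadratic bound on $A$ --- is exactly the right one, and it is the same crux the cited proof turns on.
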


A quick use of the Proposition \ref{prop1} gives the following preliminary result.
\begin{proposition} Let  $0<m \le A \le M$. Then  for every  positive unital linear map $\Phi$,  \begin{eqnarray}\label{pre}  \Phi(A^{-1})^2\le  \left(\frac{(M+m)^2}{4Mm}\right)^3 \Phi(A)^{-2}.
\end{eqnarray}
\end{proposition}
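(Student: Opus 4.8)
The plan is to combine the operator Kantorovich inequality (\ref{op-kantorovich1}) with Proposition \ref{prop1} in essentially one step. Write $K=\frac{(M+m)^2}{4Mm}$ for the Kantorovich constant. By Theorem \ref{thm1} we already know that $\Phi(A^{-1})\le K\,\Phi(A)^{-1}$, so the two positive operators $\Phi(A^{-1})$ and $K\Phi(A)^{-1}$ are comparable. Squaring such a comparison, while keeping control of the squares, is exactly the situation handled by Proposition \ref{prop1} --- provided we can supply the numerical bounds on the smaller operator $\Phi(A^{-1})$ that its hypotheses demand.

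First I would record those bounds. From $0<m\le A\le M$ and the reverse monotonicity of the inverse we get $\frac{1}{M}\le A^{-1}\le\frac{1}{m}$, and since $\Phi$ is unital and positive this yields $\frac{1}{M}\le\Phi(A^{-1})\le\frac{1}{m}$. Thus $\Phi(A^{-1})$ is a positive operator whose spectrum lies in $[1/M,\,1/m]$.

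Next I would apply Proposition \ref{prop1} with the roles $A\mapsto\Phi(A^{-1})$ and $B\mapsto K\Phi(A)^{-1}$, legitimate because $0\le\Phi(A^{-1})\le K\Phi(A)^{-1}$, and using the scalar bounds $m\mapsto 1/M$, $M\mapsto 1/m$ just obtained. The one thing worth noticing is that the Kantorovich constant is invariant under $m\mapsto 1/M$, $M\mapsto 1/m$; indeed
\[
\frac{(1/m+1/M)^2}{4(1/m)(1/M)}=\frac{(M+m)^2}{4Mm}=K.
\]
Hence Proposition \ref{prop1} gives $\Phi(A^{-1})^2\le K\bigl(K\Phi(A)^{-1}\bigr)^2$, and since $\bigl(K\Phi(A)^{-1}\bigr)^2=K^2\Phi(A)^{-2}$ this reads $\Phi(A^{-1})^2\le K^3\Phi(A)^{-2}$, which is precisely (\ref{pre}).

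There is no genuinely hard step here; the only point needing a moment's care is checking the hypotheses of Proposition \ref{prop1} for the transformed operators --- concretely, that the bounds on $\Phi(A^{-1})$ are the reciprocals $1/M,1/m$ and that they reproduce the same constant $K$. The exponent $3$ (rather than the $2$ one might hope for) is exactly the price of squaring the operator Kantorovich inequality inside Proposition \ref{prop1}: the comparison $\Phi(A^{-1})\le K\Phi(A)^{-1}$ contributes a factor $K^2$ once its right-hand side is squared, and the squaring step of Proposition \ref{prop1} introduces one further factor of $K$, for $K^3$ in total. Removing this slack is presumably what the main theorem will address.
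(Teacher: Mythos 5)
Your proof is correct and follows exactly the paper's own argument: both derive the bounds $\frac{1}{M}\le\Phi(A^{-1})\le\frac{1}{m}$ from unitality and positivity of $\Phi$, and then apply Proposition \ref{prop1} to the comparison $\Phi(A^{-1})\le\frac{(M+m)^2}{4Mm}\Phi(A)^{-1}$ coming from Theorem \ref{thm1}, using the invariance of the Kantorovich constant under $m\mapsto 1/M$, $M\mapsto 1/m$ to obtain the cube of that constant. Your closing remark about where each factor of the constant enters is a nice piece of bookkeeping not made explicit in the paper, but the mathematics is the same.
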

\begin{proof} As $\Phi$ is order preserving, then \[\frac{1}{M}\le \Phi(A^{-1})\le \frac{1}{m}.\] By Proposition \ref{prop1}, we have
\begin{eqnarray*}  \Phi(A^{-1})^2&\le& \frac{\left(\frac{1}{M}+\frac{1}{m}\right)^2}{4\frac{1}{M}\frac{1}{m}}\left(\frac{(M+m)^2}{4Mm}\Phi(A)^{-1}\right)^2\\&=& \left(\frac{(M+m)^2}{4Mm}\right)^3 \Phi(A)^{-2}.
\end{eqnarray*}
\end{proof}

As indicated in the Abstract, we are not content with the factor $\left(\frac{(M+m)^2}{4Mm}\right)^3$ in the previous proposition.  The ideal factor should be $\left(\frac{(M+m)^2}{4Mm}\right)^2$. We need a lemma to establish this fact.

\begin{lemma}\label{lem1}\cite{BK00}  Let $A, B\ge 0$.  Then the following norm inequality is valid
\begin{eqnarray}\label{e4} \|AB\|\le \frac{1}{4}\|A+B\|^2.
\end{eqnarray} \end{lemma}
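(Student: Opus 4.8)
The plan is to produce the sharp constant $\tfrac14$ by invoking one classical tool, the Heinz (square-root) inequality, and applying it twice. Recall that for $A,B\ge 0$ and \emph{any} operator $X$ one has
\[2\,\|A^{1/2}XB^{1/2}\|\le \|AX+XB\|,\]
this being the midpoint ($\nu=\tfrac12$) member of the Heinz family $\|A^{\nu}XB^{1-\nu}+A^{1-\nu}XB^{\nu}\|\le\|AX+XB\|$, valid for every unitarily invariant norm and in particular for the operator norm. I would take this as the single external ingredient; by a routine continuity argument it suffices to treat $A,B>0$, so that all fractional powers are unambiguous.

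First I would apply the inequality with $X=A^{1/2}B^{1/2}$. Regrouping the ordered product gives $A^{1/2}\,(A^{1/2}B^{1/2})\,B^{1/2}=AB$, so the left-hand side is exactly $2\|AB\|$, while the right-hand side is $\|A^{3/2}B^{1/2}+A^{1/2}B^{3/2}\|$. The one algebraic observation needed here is the factorisation
\[A^{3/2}B^{1/2}+A^{1/2}B^{3/2}=A^{1/2}(A+B)B^{1/2},\]
which turns the first step into $2\|AB\|\le\|A^{1/2}(A+B)B^{1/2}\|$.

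Next I would apply the same inequality a second time, now with $X=A+B$ (a legitimate choice since $A+B$ is a positive operator). This yields $2\,\|A^{1/2}(A+B)B^{1/2}\|\le\|A(A+B)+(A+B)B\|$, and the bracket collapses by another identity, $A(A+B)+(A+B)B=A^2+AB+BA+B^2=(A+B)^2$, whose norm is $\|A+B\|^2$. Chaining the two bounds gives $2\|AB\|\le\|A^{1/2}(A+B)B^{1/2}\|\le\tfrac12\|A+B\|^2$, that is, $\|AB\|\le\tfrac14\|A+B\|^2$. As a sanity check, when $A=B$ every inequality in the chain is an equality, which matches the known sharpness of the bound.

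The main obstacle, and essentially the \emph{only} non-elementary point, is the Heinz inequality itself; the rest of the argument is carried entirely by the two bookkeeping identities that manufacture $(A+B)^2$. Thus the whole difficulty is quarantined into a single well-documented classical estimate, and the content of the proof is the double application together with the factorisations. If a self-contained treatment were wanted, I would prove the required midpoint case from the convexity on $[0,1]$ of the Heinz mean $\nu\mapsto\|A^{\nu}XB^{1-\nu}+A^{1-\nu}XB^{\nu}\|$ (minimised at $\nu=\tfrac12$, with boundary value $\|AX+XB\|$), which is where the genuine analytic work would sit.
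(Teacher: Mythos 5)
Your overall strategy (two applications of an arithmetic--geometric-mean type norm inequality) is reasonable, and your first step is correct: taking $X=A^{1/2}B^{1/2}$ in $2\|A^{1/2}XB^{1/2}\|\le\|AX+XB\|$ does give $2\|AB\|\le\|A^{1/2}(A+B)B^{1/2}\|$, because $A(A^{1/2}B^{1/2})+(A^{1/2}B^{1/2})B=A^{3/2}B^{1/2}+A^{1/2}B^{3/2}=A^{1/2}(A+B)B^{1/2}$. The second step, however, contains a fatal algebraic error. With $X=A+B$ the Heinz inequality gives $2\|A^{1/2}(A+B)B^{1/2}\|\le\|A(A+B)+(A+B)B\|$, and
\[
A(A+B)+(A+B)B=A^2+AB+AB+B^2=A^2+2AB+B^2,
\]
which is \emph{not} $A^2+AB+BA+B^2=(A+B)^2$: the inequality keeps $A$ acting on the left of $X$ and $B$ on the right, so both cross terms come out as $AB$; it cannot manufacture the $BA$ term you need. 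The discrepancy is exactly the commutator, $A^2+2AB+B^2=(A+B)^2+(AB-BA)$, and it is not harmless: $AB-BA$ is skew-Hermitian, and for a positive operator $P$ and skew-Hermitian $K$ one has $|\langle x,(P+K)x\rangle|\ge\langle x,Px\rangle$ for every unit vector $x$, whence $\|P+K\|\ge\|P\|$. So the quantity your second step actually produces is at least $\|A+B\|^2$, i.e.\ the bound goes the wrong way; and trying instead to absorb the commutator by the triangle inequality only yields $\|AB\|\le\tfrac12\|A+B\|^2$, losing the sharp constant.

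The gap can be repaired by replacing the second step with the other classical Bhatia--Kittaneh AGM inequality, $2\|X^*Y\|\le\|XX^*+YY^*\|$ for arbitrary operators $X,Y$: taking $X=(A+B)^{1/2}A^{1/2}$ and $Y=(A+B)^{1/2}B^{1/2}$ gives $X^*Y=A^{1/2}(A+B)B^{1/2}$ and $XX^*+YY^*=(A+B)^{1/2}(A+B)(A+B)^{1/2}=(A+B)^2$, hence $2\|A^{1/2}(A+B)B^{1/2}\|\le\|(A+B)^2\|=\|A+B\|^2$; combined with your (correct) first step this yields $4\|AB\|\le\|A+B\|^2$ as desired. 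For comparison: the paper offers no proof of this lemma at all, it simply cites Bhatia--Kittaneh \cite{BK00}, where an argument of essentially this corrected type is carried out, so your plan is close in spirit to the original source but fails as written at the point indicated.
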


\begin{remark}  Drury \cite{Dru12} recently established a remarkable improvement of (\ref{e4}) when $A, B$ are moreover compact. More precisely, if $A, B\ge0$ are compact, then there exists an isometry $U$ such that \[U|AB|U^*\le \frac{1}{4}(A+B)^2.\]  \end{remark}

Now we are going to present the main theorem of this paper.

\begin{theorem}  Let $0<m\le A \le M$. Then for every positive unital linear map $\Phi$, \begin{eqnarray}\label{op-kantorovich3a}  \|\Phi(A^{-1}) \Phi(A)\| \le \frac{(M+m)^2}{4Mm},\end{eqnarray} or equivalently, \begin{eqnarray}\label{op-kantorovich3b}  \Phi(A^{-1})^2\le\left(\frac{(M+m)^2}{4Mm}\right)^2\Phi(A)^{-2}.\end{eqnarray}
  \end{theorem}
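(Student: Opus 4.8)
The plan is to prove the norm inequality \eqref{op-kantorovich3a} directly and then recover the operator inequality \eqref{op-kantorovich3b} by a purely formal equivalence, so that the failure of operator monotonicity of $t\mapsto t^2$ is sidestepped by working in the operator norm. Write $P=\Phi(A^{-1})$ and $Q=\Phi(A)$. Since $m\le A\le M$ gives $M^{-1}\le A^{-1}\le m^{-1}$, positivity and unitality of $\Phi$ yield $P\ge M^{-1}I>0$ and $Q\ge mI>0$, so both are positive and invertible. The only other ingredient I need is the reverse-Kantorovich bound already recorded in \eqref{e3}, namely $MmP+Q\le(M+m)I$.

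For the norm inequality, the key move is to apply the Bhatia--Kittaneh estimate of Lemma \ref{lem1} not to $P$ and $Q$ but to the rescaled pair $MmP$ and $Q$, both of which are positive. This gives $\|(MmP)Q\|\le\frac14\|MmP+Q\|^2$. Because $MmP+Q$ is a positive operator dominated by $(M+m)I$ thanks to \eqref{e3}, its norm is at most $M+m$, and therefore $Mm\,\|PQ\|\le\frac14(M+m)^2$. Dividing by $Mm$ produces exactly \eqref{op-kantorovich3a}. The whole force of the argument sits in choosing the scaling factor $Mm$ so that the lemma's $\frac14$ and the quantity controlled by \eqref{e3} combine into the Kantorovich constant $\frac{(M+m)^2}{4Mm}$.

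It remains to see why \eqref{op-kantorovich3a} and \eqref{op-kantorovich3b} are equivalent. Put $c=\frac{(M+m)^2}{4Mm}$. Since $P$ and $Q$ are self-adjoint, $(PQ)^*=QP$, whence $\|PQ\|^2=\|(PQ)^*(PQ)\|=\|QP^2Q\|$; as $QP^2Q\ge0$, the bound $\|PQ\|\le c$ is the same as $QP^2Q\le c^2 I$. Conjugating this last inequality by the invertible operator $Q^{-1}$ turns it into $P^2\le c^2 Q^{-2}$, which is \eqref{op-kantorovich3b}, and every step is reversible.

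I do not anticipate a genuine analytic obstacle here: once one thinks to pass to the operator norm, the Bhatia--Kittaneh inequality supplies precisely the factor $\frac14$ that the problem calls for. The only non-obvious point, and the step I would flag as the crux, is the rescaling by $Mm$ before invoking Lemma \ref{lem1}; without it the constant emerging from the lemma would not match \eqref{e3}. The reduction \eqref{op-kantorovich3a} $\Leftrightarrow$ \eqref{op-kantorovich3b} is routine operator algebra once the norm identity $\|PQ\|^2=\|QP^2Q\|$ is noticed.
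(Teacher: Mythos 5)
Your proof is correct and follows essentially the same route as the paper: the same rescaling of $\Phi(A^{-1})$ by $Mm$ before applying the Bhatia--Kittaneh lemma together with the bound \eqref{e3}, and the same reduction of \eqref{op-kantorovich3b} to \eqref{op-kantorovich3a} via $\|PQ\|^2=\|QP^2Q\|$ and conjugation by $\Phi(A)^{-1}$. Your write-up just makes explicit the $C^*$-identity step that the paper leaves implicit.
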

\begin{proof}  By Lemma \ref{lem1}, we have  \begin{eqnarray*}     Mm\|\Phi(A^{-1}) \Phi(A)\|&=&\|Mm\Phi(A^{-1}) \Phi(A)\|\\ &\le& \frac{1}{4}\|Mm\Phi(A^{-1})+\Phi(A)\|^2\\&\le &\frac{1}{4}\|(M+m)I\|^2=\frac{(M+m)^2}{4},
\end{eqnarray*} where the second inequality is by (\ref{e3}). Therefore,  $\|\Phi(A^{-1}) \Phi(A)\| \le \frac{(M+m)^2}{4Mm}$.

As every positive linear map is adjoint-preserving, i.e., $\Phi(T^*)=\Phi(T)^*$ for all $T$, to complete the proof,  note that  (\ref{op-kantorovich3a}) is equivalent to  \[ \Phi(A)\Phi(A^{-1})^2\Phi(A)\le \left(\frac{(M+m)^2}{4Mm}\right)^2,\]  and hence  \[ \Phi(A^{-1})^2\le \left(\frac{(M+m)^2}{4Mm}\right)^2\Phi(A)^{-2}.\]
\end{proof}

Further refinement of (\ref{op-kantorovich3a}) might not be true, for example, it is easy to find counterexamples that \[\|\Phi(A^{-1})\|\|\Phi(A)\|\le\frac{(M+m)^2}{4Mm}\] fails.

The next observation is useful in our derivation of Theorem \ref{thm2} below.
\begin{lemma}\label{lem2}
   For any bounded operator $X$,
\begin{eqnarray}\label{useful} |X|\le tI\Leftrightarrow  \|X\|\le t\Leftrightarrow \begin{bmatrix} tI &  X  \\ X^*  &  tI\end{bmatrix}\ge0.
\end{eqnarray}  \end{lemma}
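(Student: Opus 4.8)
The plan is to label the three conditions (i) $|X|\le tI$, (ii) $\|X\|\le t$, and (iii) the block positivity $\begin{bmatrix} tI & X \\ X^* & tI\end{bmatrix}\ge 0$, and to establish (i)$\Leftrightarrow$(ii) together with (ii)$\Leftrightarrow$(iii). At the outset I would dispose of the trivial regime: I may assume $t\ge 0$, since for $t<0$ all three statements are false (an operator norm is nonnegative, and the diagonal corners of a positive operator matrix are nonnegative).

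For (i)$\Leftrightarrow$(ii) I would work with the functional calculus for the positive operator $|X|=(X^*X)^{1/2}$. The key identity is $\||X|\|=\|X\|$, which follows from the $C^*$-identity: $\|X\|^2=\|X^*X\|=\||X|^2\|=\||X|\|^2$. Since $|X|$ is positive and self-adjoint, its norm equals its spectral radius, so $|X|\le tI$ holds exactly when the spectrum of $|X|$ lies in $[0,t]$, i.e. exactly when $\||X|\|\le t$, i.e. $\|X\|\le t$. This step is routine spectral theory and I expect no difficulty.

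The substantive step is (iii)$\Leftrightarrow$(ii), which I would prove directly from the quadratic form rather than via a Schur complement. Testing positivity on a vector $u\oplus v\in\mathcal H\oplus\mathcal H$ gives the expression $t\|u\|^2+t\|v\|^2+2\re\langle u,Xv\rangle$, so (iii) says this is $\ge 0$ for all $u,v$. Replacing $u$ by a suitable unimodular multiple makes the cross term equal to $-2|\langle u,Xv\rangle|$, and then rescaling $u\mapsto\lambda u$, $v\mapsto\lambda^{-1}v$ and minimizing in $\lambda>0$ reduces the requirement to $t\|u\|\,\|v\|\ge|\langle u,Xv\rangle|$ for all $u,v$. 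Taking the supremum over unit vectors and using $\|X\|=\sup_{\|u\|=\|v\|=1}|\langle u,Xv\rangle|$ turns this into $\|X\|\le t$; conversely, Cauchy--Schwarz together with the arithmetic--geometric mean inequality recovers positivity from $\|X\|\le t$. The only place demanding care is the boundary case $t=0$: the customary Schur-complement argument needs the corner $tI$ to be invertible, and the direct quadratic-form computation above is precisely what sidesteps that obstruction, handling $t=0$ and $t>0$ uniformly. That boundary issue is the main (minor) obstacle I anticipate, and the direct route is chosen to avoid it.
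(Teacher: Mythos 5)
Your proof is correct, but it takes a genuinely different route from the paper for a simple reason: the paper offers no argument at all, its entire ``proof'' being the citation \cite[Lemma 3.5.12]{HJ91}. Your two-step argument is sound. For (i)$\Leftrightarrow$(ii), the $C^*$-identity $\|X\|^2=\|X^*X\|=\||X|\|^2$ together with the fact that a positive operator $P$ satisfies $P\le tI$ exactly when its spectrum (equivalently, its norm) is bounded by $t$ settles the equivalence. For (ii)$\Leftrightarrow$(iii), the quadratic-form computation $t\|u\|^2+t\|v\|^2+2\operatorname{Re}\langle u,Xv\rangle$, the phase substitution making the cross term $-2|\langle u,Xv\rangle|$, and the scaling $u\mapsto\lambda u$, $v\mapsto\lambda^{-1}v$ correctly reduce block positivity to $t\|u\|\,\|v\|\ge|\langle u,Xv\rangle|$ for all $u,v$, which is $\|X\|\le t$; the converse direction via Cauchy--Schwarz and AM--GM is equally clean. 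What your approach buys: it is self-contained; it is valid verbatim for bounded operators on an infinite-dimensional Hilbert space, whereas the source the paper cites is a matrix-analysis text, so strictly speaking the paper leans on a finite-dimensional statement for an operator-theoretic claim; and, as you observe, the direct quadratic form handles $t=0$ uniformly, with no Schur-complement invertibility hypothesis. What the citation buys the paper is brevity, since the lemma is classical. Two cosmetic points: when you rescale, the degenerate cases $u=0$ or $v=0$ should be noted separately (the target inequality is trivial there), and your dismissal of $t<0$ is fine since then $tI\ge0$ fails on the diagonal corners, $\|X\|\ge0>t$, and $0\le|X|\le tI$ is impossible.
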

\begin{proof} It can be found, for example, in \cite[Lemma 3.5.12]{HJ91}.  \end{proof}

 \begin{theorem} \label{thm2} Let  $0<m \le A \le M$.  Then for every  positive unital linear map $\Phi$, \begin{eqnarray}\label{op-kantorovich5a}  |\Phi(A^{-1})\Phi(A)+\Phi(A)\Phi(A^{-1})| \le \frac{(M+m)^2}{2Mm}
\end{eqnarray} and \begin{eqnarray}\label{op-kantorovich5b}  \Phi(A^{-1})\Phi(A)+\Phi(A)\Phi(A^{-1}) \le \frac{(M+m)^2}{2Mm}.
\end{eqnarray}
  \end{theorem}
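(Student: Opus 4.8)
The plan is to reduce everything to the norm estimate (\ref{op-kantorovich3a}) already established, via the triangle inequality. Write $S=\Phi(A^{-1})\Phi(A)+\Phi(A)\Phi(A^{-1})$. The first thing I would observe is that $S$ is self-adjoint: since $\Phi$ is adjoint-preserving, both $\Phi(A)$ and $\Phi(A^{-1})$ are self-adjoint, and $(\Phi(A^{-1})\Phi(A))^*=\Phi(A)\Phi(A^{-1})$, so $S$ equals its own adjoint. This self-adjointness is exactly what will let the single norm bound (\ref{op-kantorovich5a}) yield the order relation (\ref{op-kantorovich5b}).

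Next I would bound the norm of $S$. By the triangle inequality together with $\|X^*\|=\|X\|$,
\[
\|S\|\le \|\Phi(A^{-1})\Phi(A)\|+\|\Phi(A)\Phi(A^{-1})\|=2\|\Phi(A^{-1})\Phi(A)\|,
\]
where the final equality uses that $\Phi(A)\Phi(A^{-1})=(\Phi(A^{-1})\Phi(A))^*$. Invoking the norm form (\ref{op-kantorovich3a}) of the main theorem gives $\|\Phi(A^{-1})\Phi(A)\|\le \frac{(M+m)^2}{4Mm}$, and hence $\|S\|\le \frac{(M+m)^2}{2Mm}$.

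It then remains to translate this scalar norm bound into the two stated operator inequalities. By Lemma \ref{lem2}, the condition $\|S\|\le t$ is equivalent to $|S|\le tI$; applied with $t=\frac{(M+m)^2}{2Mm}$ this is precisely (\ref{op-kantorovich5a}). For (\ref{op-kantorovich5b}) I would use the elementary fact that a self-adjoint operator is dominated by its absolute value, $S\le |S|$, and combine it with (\ref{op-kantorovich5a}) to obtain $S\le |S|\le \frac{(M+m)^2}{2Mm}$, as required.

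I expect no serious obstacle here once the norm version of the Kantorovich inequality is in hand; the only points needing a little care are recognizing that $S$ is self-adjoint (so that $|S|\le tI$ forces $S\le tI$) and that the two summands of $S$ are mutual adjoints (so the triangle inequality costs only a factor of $2$ rather than forcing separate estimates of each product). It is worth noting that bounding $\|\Phi(A^{-1})\|\,\|\Phi(A)\|$ would \emph{not} suffice, as remarked after the main theorem; this is exactly why the submultiplicative bound on the product $\|\Phi(A^{-1})\Phi(A)\|$ is the correct tool.
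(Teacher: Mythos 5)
Your proof is correct, but it reaches the conclusion by different machinery than the paper. For (\ref{op-kantorovich5a}), you apply the triangle inequality together with $\|X^*\|=\|X\|$ to get $\|S\|\le 2\|\Phi(A^{-1})\Phi(A)\|$ and then convert the norm bound into $|S|\le \frac{(M+m)^2}{2Mm}$ via Lemma \ref{lem2}; the paper instead uses Lemma \ref{lem2} at the outset to turn (\ref{op-kantorovich3a}) into positivity of the two block matrices
\[
\begin{bmatrix} \frac{(M+m)^2}{4Mm}I & \Phi(A^{-1})\Phi(A) \\ \Phi(A)\Phi(A^{-1}) & \frac{(M+m)^2}{4Mm}I\end{bmatrix}\ge 0, \qquad
\begin{bmatrix} \frac{(M+m)^2}{4Mm}I & \Phi(A)\Phi(A^{-1}) \\ \Phi(A^{-1})\Phi(A) & \frac{(M+m)^2}{4Mm}I\end{bmatrix}\ge 0,
\]
sums them, and reads off the conclusion from the resulting positive block matrix. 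These are the same argument in substance---adding those two positive block matrices is exactly a block-matrix rendition of your triangle-inequality step---so your route is a more elementary phrasing of the same symmetrization. The real divergence is in (\ref{op-kantorovich5b}): you invoke $S\le |S|$ for self-adjoint $S$, whereas the paper deduces it from the maximal characterization of the geometric mean (since the summed block matrix is positive with self-adjoint off-diagonal entry $S$, that characterization gives $S\le tI\sharp tI=tI$). Your derivation is shorter and is in fact the observation the paper itself makes in the remark following the theorem (that $|X|\ge X$ for self-adjoint $X$, so (\ref{op-kantorovich5a}) is stronger than (\ref{op-kantorovich5b})); the paper's choice has the expository virtue of staying entirely within the positivity/block-matrix framework and of exercising the maximal characterization introduced in Section 2. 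Incidentally, your write-up avoids a small slip in the paper's proof, which cites ``Lemma \ref{lem1} again'' where Lemma \ref{lem2} is meant.
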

\begin{proof}  By (\ref{op-kantorovich3a}) and Lemma \ref{lem2}, we have \begin{eqnarray*}   \begin{bmatrix}  \frac{(M+m)^2}{4Mm}I &  \Phi(A^{-1}) \Phi(A)  \\  \Phi(A)\Phi(A^{-1})   &  \frac{(M+m)^2}{4Mm}I\end{bmatrix}\ge 0 ~~\hbox{and}~~  \begin{bmatrix}  \frac{(M+m)^2}{4Mm}I & \Phi(A)\Phi(A^{-1})  \\   \Phi(A^{-1}) \Phi(A)   &  \frac{(M+m)^2}{4Mm}I\end{bmatrix}\ge 0.
\end{eqnarray*}
Summing up these two operator matrices, we have   \begin{eqnarray*}   \begin{bmatrix}  \frac{(M+m)^2}{2Mm}I &  \Phi(A^{-1}) \Phi(A)+\Phi(A)\Phi(A^{-1})   \\ \Phi(A^{-1}) \Phi(A)+\Phi(A)\Phi(A^{-1})    &  \frac{(M+m)^2}{2Mm}I\end{bmatrix}\ge 0.
\end{eqnarray*} By Lemma \ref{lem1} again,  (\ref{op-kantorovich5a}) follows.  As $\Phi(A^{-1}) \Phi(A)+\Phi(A)\Phi(A^{-1})$ is self-adjoint, (\ref{op-kantorovich5b}) follows from the maximal characterization of geometric mean.
\end{proof}

 \begin{remark}  As $|X|\ge X$ for any self-adjoint $X$, we  find that (\ref{op-kantorovich5a}) is stronger than (\ref{op-kantorovich5b}). The reason to present  (\ref{op-kantorovich5a}) and  (\ref{op-kantorovich5b}) separately is that they are different. As $\Phi(A^{-1})$ and $\Phi(A)$ do not commute generally,  $\Phi(A^{-1})\Phi(A)+\Phi(A)\Phi(A^{-1})$ need not be positive in general.  \end{remark}

  \begin{remark}  Both  (\ref{op-kantorovich5a}) and  (\ref{op-kantorovich5b}) may be regarded as operator versions of Kantorovich inequality. We note that a similar version to  (\ref{op-kantorovich5b}) had been established earlier by Sun \cite{Sun91}, in the process of extending Bauer-Fike inequality on condition numbers. However, his line of proof is quite different from ours. Sun's operator version of Kantorovich inequality seems to be overlooked by the community, though it was proved almost at the same time as the appearance of   (\ref{op-kantorovich1}).  Interestingly, Sun's extension of Bauer-Fike inequality on condition numbers can also be proved using  (\ref{op-kantorovich1}); see \cite{Sun91} for more details. \end{remark}

\section{Comments}
When talking about the Kantorovich inequality, we cannot go without mentioning its cousin, the Wielandt inequality. The inequality of Wielandt \cite[p. 443]{HJ85} states that if $0<m\le A \le M$, and $x, y\in \mathcal {H}$ with $x\perp y$, then
 \begin{eqnarray}\label{e5} |\langle x, Ay\rangle|^2\le  \left(\frac{M-m}{M+m}\right)^2\langle x,Ax\rangle\langle y, Ay\rangle.
\end{eqnarray}

Let $\Phi$ be a unital positive linear map between $C^*$-algebras. We say that $\Phi$ is $2$-positive if whenever
the $2\times 2$ operator matrix  \[ \begin{bmatrix}A &  B  \\ B^*  & C\end{bmatrix}\] is positive, then so is  \[ \begin{bmatrix}\Phi(A) & \Phi(B) \\ \Phi(B^*)  & \Phi(C)\end{bmatrix}.\]

Operator version of  (\ref{e5}) was proved by  Bhatia \& Davis \cite{BD00} (independently by Wang \& Ip \cite{WI99}).

\begin{theorem}\label{thm3}\cite{BD00}  Let $0<m\le A\le M$ and let $X, Y$ be two
partial isometries on $\mathcal {H}$ whose final spaces are orthogonal to each other. Then for every $2$-positive linear map $\Phi$, \begin{eqnarray}\label{op-wielandt1}  \Phi(X^*AY)\Phi(Y^*AY)^{-1}\Phi(Y^*AX)\le \left(\frac{M-m}{M+m}\right)^2 \Phi(X^*AX).
\end{eqnarray}
  \end{theorem}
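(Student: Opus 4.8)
The plan is to recast the assertion as the positivity of a single $2\times2$ operator matrix, transport that positivity through $\Phi$ using $2$-positivity, and then read off (\ref{op-wielandt1}) by a Schur complement argument. Write $\gamma=\frac{M-m}{M+m}$. Granting for the moment that $\Phi(Y^*AY)$ is invertible, the Schur complement criterion tells us that (\ref{op-wielandt1}) is equivalent to
\[\begin{bmatrix}\gamma^2\,\Phi(X^*AX) & \Phi(X^*AY)\\ \Phi(Y^*AX) & \Phi(Y^*AY)\end{bmatrix}\ge 0.\]
Since $A$ is self-adjoint we have $(X^*AY)^*=Y^*AX$, so the displayed matrix is precisely the image, under the entrywise action of $\Phi$, of
\[M_0:=\begin{bmatrix}\gamma^2\,X^*AX & X^*AY\\ Y^*AX & Y^*AY\end{bmatrix}.\]
Because $\Phi$ is $2$-positive, it therefore suffices to establish $M_0\ge0$; here it is essential that we assume $2$-positivity rather than mere positivity, since the argument genuinely needs $\Phi$ to preserve positivity of $2\times2$ blocks.

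To prove $M_0\ge0$ I would test its quadratic form on an arbitrary vector $u\oplus v\in\mathcal{H}\oplus\mathcal{H}$ and set $\xi=Xu$, $\eta=Yv$. Using that $X,Y$ are partial isometries one gets $\langle u,X^*AXu\rangle=\langle\xi,A\xi\rangle$, $\langle v,Y^*AYv\rangle=\langle\eta,A\eta\rangle$, and $\langle u,X^*AYv\rangle=\langle\xi,A\eta\rangle$, while the two cross terms are complex conjugates, so the form equals $\gamma^2\langle\xi,A\xi\rangle+\langle\eta,A\eta\rangle+2\re\langle\xi,A\eta\rangle$. The hypothesis that the final spaces of $X$ and $Y$ are orthogonal means exactly $X^*Y=0$, whence $\xi\perp\eta$; this is precisely the orthogonality needed to invoke the scalar Wielandt inequality (\ref{e5}). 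That inequality gives $|\re\langle\xi,A\eta\rangle|\le|\langle\xi,A\eta\rangle|\le\gamma\sqrt{\langle\xi,A\xi\rangle\langle\eta,A\eta\rangle}$, and substituting $a=\gamma\sqrt{\langle\xi,A\xi\rangle}$ and $b=\sqrt{\langle\eta,A\eta\rangle}$ reduces the form to $a^2+b^2-2ab=(a-b)^2\ge0$. Hence $M_0\ge0$.

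Running this backwards completes the argument: $2$-positivity yields the first displayed matrix, whose Schur complement is (\ref{op-wielandt1}). The one genuine obstacle is the invertibility of $\Phi(Y^*AY)$ required to form that Schur complement. Since $Y$ is a partial isometry, $Y^*AY$ vanishes on the orthogonal complement of the initial space of $Y$, so $Y^*AY$—and hence possibly $\Phi(Y^*AY)$—need not be invertible; accordingly $\Phi(Y^*AY)^{-1}$ should be read as a Moore--Penrose inverse, and the equivalence between block positivity and (\ref{op-wielandt1}) invoked in its generalized form (which also carries the range inclusion $\operatorname{ran}\Phi(Y^*AX)\subseteq\operatorname{ran}\Phi(Y^*AY)$, itself automatic from the block positivity). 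With that convention fixed, every remaining step is a routine block-matrix manipulation resting on the single application of the scalar Wielandt inequality (\ref{e5}).
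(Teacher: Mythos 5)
Your proof is correct, but note first that the paper itself contains no proof of this statement: Theorem \ref{thm3} is quoted from Bhatia and Davis \cite{BD00}, so the only possible comparison is with that cited source. According to the paper's own remark following the theorem, the Bhatia--Davis argument runs through the Kantorovich inequality and thereby shows that the Kantorovich and Wielandt inequalities are essentially equivalent; you instead lift the scalar Wielandt inequality (\ref{e5}) (also quoted in the paper, from \cite[p.~443]{HJ85}) directly to the operator level: positivity of
\[
M_0=\begin{bmatrix}\left(\tfrac{M-m}{M+m}\right)^{2} X^*AX & X^*AY\\ Y^*AX & Y^*AY\end{bmatrix}
\]
is checked on vectors $u\oplus v$ via (\ref{e5}) applied to the orthogonal pair $\xi=Xu$, $\eta=Yv$; then $2$-positivity (used exactly once, genuinely needed, and legitimate because $(X^*AY)^*=Y^*AX$ makes $M_0$ an admissible self-adjoint block matrix) transports $M_0\ge 0$ through $\Phi$; and a Schur-complement argument finishes. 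This is a genuinely different, more elementary, and in fact more general route: despite your parenthetical claim, nothing in your computation uses that $X,Y$ are partial isometries --- identities like $\langle u,X^*AXu\rangle=\langle \xi,A\xi\rangle$ are plain adjoint manipulations --- so your argument actually proves the inequality for arbitrary bounded $X,Y$ with $X^*Y=0$. Your handling of the possible non-invertibility of $\Phi(Y^*AY)$ by a generalized (Moore--Penrose) inverse is also consistent with the convention the paper adopts for the Lieb--Ruskai inequality (\ref{schwarz}). The only thing your route gives up, relative to \cite{BD00}, is the structural insight the paper emphasizes, namely that the Kantorovich and Wielandt inequalities are essentially equivalent.
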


It is worthwhile to mention that Bhatia \& Davis's proof of Theorem \ref{thm3} reveals that the Kantorovich inequality and the Wielandt inequality are essentially equivalent.

The Schwarz inequality proved by Lieb and Ruskai \cite{LR74} says that for every $2$-positive linear map $\Phi$,
\begin{eqnarray}\label{schwarz} \Phi(T^*T)\ge \Phi(T^*S)\Phi(S^*S)^{-1}\Phi(S^*T)
\end{eqnarray}
Here if $\Phi(S^*S)$ is not invertible, $\Phi(S^*S)^{-1}$ is understood to be its generalized inverse.

With Proposition \ref{prop0}, (\ref{op-wielandt1}) and  (\ref{schwarz}) lead to the following corollary.
\begin{cor}  Under the same condition as in Theorem \ref{thm3}. Then for every $2$-positive linear map $\Phi$,
 \begin{eqnarray}\label{op-wielandt2}  \Big(\Phi(X^*AY)\Phi(Y^*AY)^{-1}\Phi(Y^*AX)\Big)\sharp \Phi(X^*AX)^{-1}\le \frac{M-m}{M+m}.
\end{eqnarray} Also, we have \begin{eqnarray}\label{schwarz2} \Big(\Phi(T^*S)\Phi(S^*S)^{-1}\Phi(S^*T)\Big)\sharp \Phi(T^*T)^{-1}\le I.
\end{eqnarray}
  \end{cor}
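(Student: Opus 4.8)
The plan is to read off both inequalities from the two operator inequalities already at hand—Wielandt (\ref{op-wielandt1}) and the Lieb--Ruskai Schwarz inequality (\ref{schwarz})—by feeding them into Proposition \ref{prop0}. The unifying observation is that each hypothesis has the shape $P\le \lambda\,Q$ with $P,Q\ge 0$ and a scalar $\lambda>0$, and what is wanted in each case is a bound on the geometric mean $P\sharp Q^{-1}$. Proposition \ref{prop0} settles precisely the case $\lambda=1$, so the only extra ingredient is the behaviour of $\sharp$ under scalar scaling.

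For (\ref{schwarz2}) this is immediate. Setting $R=\Phi(T^*S)\Phi(S^*S)^{-1}\Phi(S^*T)$, the Schwarz inequality (\ref{schwarz}) reads $0\le R\le \Phi(T^*T)$, so Proposition \ref{prop0} applied with $A=R$ and $B=\Phi(T^*T)$ gives $R\sharp \Phi(T^*T)^{-1}\le I$, which is exactly (\ref{schwarz2}).

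For (\ref{op-wielandt2}) I write $c=\frac{M-m}{M+m}$, $P=\Phi(X^*AY)\Phi(Y^*AY)^{-1}\Phi(Y^*AX)$ and $Q=\Phi(X^*AX)$. Wielandt (\ref{op-wielandt1}) then says $P\le c^2 Q$, i.e.\ $0\le c^{-2}P\le Q$, and Proposition \ref{prop0} yields $(c^{-2}P)\sharp Q^{-1}\le I$. Here I invoke the homogeneity of the geometric mean, $(\alpha A)\sharp B=\sqrt{\alpha}\,(A\sharp B)$ for $\alpha>0$, which follows at once from the defining formula $A\sharp B=A^{1/2}(A^{-1/2}BA^{-1/2})^{1/2}A^{1/2}$. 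With $\alpha=c^{-2}$ this gives $(c^{-2}P)\sharp Q^{-1}=c^{-1}(P\sharp Q^{-1})$, and therefore $P\sharp Q^{-1}\le cI=\frac{M-m}{M+m}$, which is (\ref{op-wielandt2}).

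The one point requiring care is invertibility: Proposition \ref{prop0} is stated for $0<A$, whereas the operators $P$ and $R$ (built from generalized inverses, as flagged after (\ref{schwarz})) need not be invertible. I expect this to be the main technical obstacle, and I would dispose of it by a routine limiting argument—replacing $P$ by $P+\varepsilon I$ (respectively $R$ by $R+\varepsilon I$), applying Proposition \ref{prop0}, and letting $\varepsilon\downarrow 0$ using the continuity and monotonicity of $\sharp$ in its first argument. Alternatively, for (\ref{op-wielandt2}) one can bypass Proposition \ref{prop0} altogether and argue straight from monotonicity of the geometric mean: since $P\le c^2Q$, one has $P\sharp Q^{-1}\le (c^2Q)\sharp Q^{-1}=c\,(Q\sharp Q^{-1})=cI$.
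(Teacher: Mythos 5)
Your proposal is correct and follows exactly the route the paper intends: the paper derives this corollary by feeding the Wielandt inequality (\ref{op-wielandt1}) and the Lieb--Ruskai inequality (\ref{schwarz}) into Proposition \ref{prop0}, with the scalar factor absorbed via the homogeneity $(\alpha A)\sharp B=\sqrt{\alpha}\,(A\sharp B)$, just as you do. Your explicit handling of the homogeneity step and of the non-invertibility of $P$ and $R$ (by perturbation and continuity of $\sharp$) simply fills in details the paper leaves implicit.
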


 \begin{proposition} There are examples that \begin{eqnarray}\label{schwarz-fail}\|\Phi(T^*S)\Phi(S^*S)^{-1}\Phi(S^*T)\Phi(T^*T)^{-1}\|\le 1
\end{eqnarray} fails, where $\Phi$ is some $2$-positive linear map.
  \end{proposition}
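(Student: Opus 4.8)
The plan is to refute (\ref{schwarz-fail}) by an explicit counterexample rather than to prove anything. Set $P=\Phi(T^*S)\Phi(S^*S)^{-1}\Phi(S^*T)$ and $Q=\Phi(T^*T)$; both are positive, and the Schwarz inequality (\ref{schwarz}) already gives $0\le P\le Q$, which is exactly the input used for (\ref{schwarz2}) through Proposition \ref{prop0}. The object in question is the operator norm $\|PQ^{-1}\|$. The guiding observation is that $P\le Q$ controls only the \emph{spectral radius} of $PQ^{-1}$: since $PQ^{-1}$ is similar to $Q^{-1/2}PQ^{-1/2}\le I$ via the invertible $Q^{1/2}$, one has $\rho(PQ^{-1})\le 1$, but nothing is forced on $\|PQ^{-1}\|$ once $P$ and $Q$ cease to commute. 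This is the same decoupling recorded earlier, where $\|\Phi(A^{-1})\|\,\|\Phi(A)\|\le \frac{(M+m)^2}{4Mm}$ was seen to fail.

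Following this, I would search among the smallest cases and take $\Phi$ to be the identity map on $2\times 2$ matrices, which is completely positive and a fortiori $2$-positive. Then $P=T^*S(S^*S)^{-1}S^*T=T^*E_ST$, where $E_S=S(S^*S)^{-1}S^*$ is the orthogonal projection onto the range of $S$ (read through the generalized inverse, as the statement permits), and $Q=T^*T$. Taking $S$ of rank one makes $P=\tfrac12 vv^*$ a rank-one positive matrix; then $PQ^{-1}$ is rank one with norm $\tfrac12\|v\|\,\|Q^{-1}v\|$, which by Cauchy--Schwarz strictly exceeds the spectral radius $\tfrac12\langle v,Q^{-1}v\rangle\le 1$ whenever $v$ is not an eigenvector of $Q$. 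Arranging $Q$ to be ill-conditioned and $v$ to be misaligned with its eigenvectors then drives the norm above $1$.

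Concretely I would take $S=\left[\begin{smallmatrix}1&0\\1&0\end{smallmatrix}\right]$ and $T=\di(1,t)$ with $0<t\neq 1$. A one-line computation yields $P=\tfrac12\left[\begin{smallmatrix}1&t\\t&t^2\end{smallmatrix}\right]$, $Q=\di(1,t^2)$, and therefore $PQ^{-1}=\tfrac12\left[\begin{smallmatrix}1&1/t\\t&1\end{smallmatrix}\right]$. This last matrix has spectral radius exactly $1$ yet operator norm $\tfrac12\sqrt{2+t^2+t^{-2}}$, which exceeds $1$ for every $t\neq 1$; the choice $t=\tfrac12$ gives $\|PQ^{-1}\|=\tfrac54$. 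Hence (\ref{schwarz-fail}) can indeed fail.

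The only real obstacle is conceptual: recognizing that non-commutativity of $P$ and $Q$ severs the norm $\|PQ^{-1}\|$ from the spectral bound $\rho(PQ^{-1})\le1$, after which everything reduces to the displayed matrix arithmetic. The one bookkeeping point to handle carefully is that a rank-one $S$ makes $S^*S$ singular, so $\Phi(S^*S)^{-1}$ must be interpreted as the generalized inverse precisely as prescribed just before (\ref{schwarz-fail}).
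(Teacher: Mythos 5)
Your counterexample is correct --- I verified the computation: with $\Phi$ the identity map on $2\times 2$ matrices, $S=\begin{bmatrix}1&0\\1&0\end{bmatrix}$ and $T=\di(1,t)$, one indeed gets $P=\frac12\begin{bmatrix}1&t\\t&t^2\end{bmatrix}$, $Q=\di(1,t^2)$, and $PQ^{-1}=\frac12\begin{bmatrix}1&1/t\\t&1\end{bmatrix}$, a rank-one matrix with spectral radius $1$ and norm $\frac12\sqrt{2+t^2+t^{-2}}=\frac12\left(t+t^{-1}\right)>1$ for all $t\neq 1$ --- but your route is genuinely different from the paper's. The paper argues indirectly: it first notes that (\ref{schwarz-fail}) is equivalent (conjugating by $Q=\Phi(T^*T)$) to $P^2\le Q^2$; since every positive block matrix $\begin{bmatrix}C&X^*\\X&D\end{bmatrix}$ arises as a Gram matrix $\begin{bmatrix}S^*S&S^*T\\T^*S&T^*T\end{bmatrix}$ with $\Phi=\mathrm{id}$, universal validity of (\ref{schwarz-fail}) would force $\left(XC^{-1}X^*\right)^2\le D^2$ for all such blocks; specializing $C=A^{-1}$, $X=I$, $D=B$ with $0<A\le B$ would then give $A^2\le B^2$, contradicting the failure of operator monotonicity of $t\mapsto t^2$ --- the same mechanism already used in Proposition \ref{prop0}. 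So the paper's proof is existence-by-contradiction, outsourcing the actual example to the known fact that $A\le B$ does not imply $A^2\le B^2$, whereas yours is constructive and self-contained: it exhibits explicit matrices, shows in addition that the norm in (\ref{schwarz-fail}) can be made arbitrarily large (since $\frac12\left(t+t^{-1}\right)\to\infty$ as $t\to 0$), and isolates the conceptual point that the Schwarz inequality $P\le Q$ controls only the spectral radius $\rho(PQ^{-1})\le 1$ and not the operator norm once $P$ and $Q$ fail to commute. Your treatment of the singular $S^*S$ via the generalized inverse is also consistent with the convention the paper states right after (\ref{schwarz}); if one wished to avoid it entirely, a small perturbation making $S$ invertible preserves the strict inequality by continuity.
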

  \begin{proof} Note that (\ref{schwarz-fail}) is equivalent to \begin{eqnarray*} \Big(\Phi(T^*S)\Phi(S^*S)^{-1}\Phi(S^*T)\Big)^2\le \Phi(T^*T)^{2}.
\end{eqnarray*} In particular, \begin{eqnarray}\label{schwarz-faila}\left(XC^{-1}X^*\right)^2\le D^2,
\end{eqnarray} whenever $\begin{bmatrix}C & X^*  \\ X  &  D\end{bmatrix}$ is positive.   If $0< A\le B$, then $\begin{bmatrix}A^{-1} &  I  \\ I  &  B\end{bmatrix}$ is positive. Assume (\ref{schwarz-faila}) is valid, then it would imply
$A^2\le B^2$, which is absurd.   \end{proof}

  Despite the invalidity of (\ref{schwarz-fail}), there is some evidence that the following assertion could be true.  We have been unable to prove (or disprove) it.
   \begin{conj}\label{conjecture}  Under the same condition as in Theorem \ref{thm3}. Then for every $2$-positive linear map $\Phi$,
\begin{eqnarray}\label{op-wielandt3} \|\Phi(X^*AY)\Phi(Y^*AY)^{-1}\Phi(Y^*AX)\Phi(X^*AX)^{-1}\|\le \left(\frac{M-m}{M+m}\right)^2.
\end{eqnarray}
  \end{conj}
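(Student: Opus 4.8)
The plan is to mirror the proof of the main theorem (\ref{op-kantorovich3a}). Set $P=\Phi(X^*AY)\Phi(Y^*AY)^{-1}\Phi(Y^*AX)$ and $Q=\Phi(X^*AX)$. Since $A$ is self-adjoint, $\Phi(X^*AY)^*=\Phi(Y^*AX)$, so $P\ge 0$, and clearly $Q\ge 0$; the conjectured bound (\ref{op-wielandt3}) then reads $\|PQ^{-1}\|\le c^2$ with $c=\frac{M-m}{M+m}$. By $2$-positivity applied to $\begin{bmatrix}X^*\\ Y^*\end{bmatrix}A\begin{bmatrix}X & Y\end{bmatrix}\ge 0$, the block $\begin{bmatrix}Q & \Phi(X^*AY)\\ \Phi(Y^*AX) & \Phi(Y^*AY)\end{bmatrix}$ is positive, and Theorem \ref{thm3} sharpens its Schur complement from $P\le Q$ to the operator inequality $P\le c^2 Q$. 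Exactly as in the Kantorovich story---where the Marshall--Olkin bound (\ref{op-kantorovich1}) does not by itself yield the norm bound (\ref{op-kantorovich3a})---this operator inequality is not enough: $PQ^{-1}$ is not self-adjoint, so its operator norm can exceed its spectral radius, and the latter is all that $P\le c^2 Q$ controls.

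First I would try to imitate the linear inequality (\ref{e3}) and then feed the result into Lemma \ref{lem1}. Concretely, the hope is to find scalars $\alpha,\beta,\gamma>0$ with $\gamma^2=4\alpha\beta\,c^2$ together with an operator inequality $\alpha P+\beta Q^{-1}\le \gamma I$; then Lemma \ref{lem1}, applied to the positive operators $\alpha P$ and $\beta Q^{-1}$, would give $\alpha\beta\|PQ^{-1}\|\le \tfrac14\|\alpha P+\beta Q^{-1}\|^2\le \tfrac14\gamma^2=\alpha\beta\,c^2$, which is precisely (\ref{op-wielandt3}). Equivalently, by Lemma \ref{lem2} it would suffice to exhibit the positivity of $\begin{bmatrix}c^2I & PQ^{-1}\\ Q^{-1}P & c^2I\end{bmatrix}$.

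The hard part will be manufacturing that linear inequality with the sharp constant. In the Kantorovich case (\ref{e3}) is free, coming from the scalar fact $(M-t)(m-t)t^{-1}\le 0$ on $[m,M]$, which survives the positive unital map $\Phi$; here $P$ is built from the diagonal and off-diagonal blocks quadratically through the inverse $\Phi(Y^*AY)^{-1}$, so there is no single operator whose spectral calculus delivers the bound. Settling for the crude estimates $0\le P\le c^2 Q$ together with $m\le Q\le M$ (valid when $X$ is an isometry) and then optimizing $\alpha,\beta$ only produces $\|PQ^{-1}\|\le c^2 M/m$, off by the factor $M/m$; so a genuinely operatorial inequality, not separate scalar bounds on $P$ and $Q$, is required. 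A second route would be to transport (\ref{op-kantorovich3a}) through the Bhatia--Davis equivalence of the two inequalities, but that reduction runs on Schur complements and congruences that are non-unitary similarities: these preserve the order $\le$ (hence Theorem \ref{thm3}) but not the operator norm, so the norm statement is not carried along. Overcoming either difficulty---finding the sharp linear inequality, or a norm-preserving reduction---seems to be the real content, and is presumably why the assertion is left open; for compact operators one might additionally exploit Drury's refinement of Lemma \ref{lem1}, though this still does not supply the missing linear inequality.
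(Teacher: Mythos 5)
The statement you were asked to prove is not proved in the paper at all: it is stated as Conjecture \ref{conjecture}, and the author writes explicitly that he has ``been unable to prove (or disprove) it.'' So there is no proof to compare yours against, and, appropriately, you do not claim to give one --- your proposal is a diagnosis of why the method of the main theorem fails to transfer, not a proof. That diagnosis is sound and matches the paper's own situation. The reduction you set up is the correct analog: with $P=\Phi(X^*AY)\Phi(Y^*AY)^{-1}\Phi(Y^*AX)$, $Q=\Phi(X^*AX)$ and $c=\frac{M-m}{M+m}$, the conjecture reads $\|PQ^{-1}\|\le c^2$, and by Lemma \ref{lem1} (or Lemma \ref{lem2}) it would follow from a linear inequality $\alpha P+\beta Q^{-1}\le \gamma I$ with $\gamma^2=4\alpha\beta c^2$. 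Your individual assertions check out: $P\ge 0$ since $\Phi(X^*AY)^*=\Phi(Y^*AX)$; Theorem \ref{thm3} gives $P\le c^2Q$, which controls only the spectral radius of $PQ^{-1}$ (because $Q^{-1/2}PQ^{-1/2}\le c^2$ and $PQ^{-1}$ is similar to it), not its operator norm; and the crude bounds $P\le c^2M$, $Q^{-1}\le m^{-1}$ (for $X$ an isometry), optimized over $\alpha,\beta$, yield only $c^2M/m$.

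The obstruction you identify is the real one. In the Kantorovich case the linear inequality (\ref{e3}) comes for free from the scalar inequality $(M-t)(m-t)t^{-1}\le 0$ on $[m,M]$ pushed through the unital positive map $\Phi$; here $P$ is built quadratically from off-diagonal blocks of $\Phi$ through an inverse, so no functional calculus of a single operator manufactures the analogous inequality with the sharp constant, and the Bhatia--Davis equivalence between Kantorovich and Wielandt runs through congruences that preserve the operator order but not the operator norm, so (\ref{op-kantorovich3a}) cannot simply be transported. In short: there is no gap \emph{relative to the paper} --- your conclusion that the assertion is genuinely open, and your account of exactly what is missing, coincide with the author's reason for leaving it as a conjecture. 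The only caveat is that, as a proof attempt, the proposal is of course incomplete; but so is the state of the art recorded in the paper.
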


Following the line of  the proof of Theorem \ref{thm2}, we can present the following:

 \begin{proposition} Assume that Conjecture \ref{conjecture} is true. For every $2$-positive linear map $\Phi$, define \[\Gamma=\Phi(X^*AY)\Phi(Y^*AY)^{-1}\Phi(Y^*AX)\Phi(X^*AX)^{-1}.\] Then
\begin{eqnarray}\label{op-wielandt4a} \frac{1}{2}|\Gamma+\Gamma^*|\le \left(\frac{M-m}{M+m}\right)^2
\end{eqnarray} and \begin{eqnarray}\label{op-wielandt4b} \frac{1}{2}\Big(\Gamma+\Gamma^*\Big)\le \left(\frac{M-m}{M+m}\right)^2.
\end{eqnarray}
  \end{proposition}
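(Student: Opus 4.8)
The plan is to follow verbatim the argument of Theorem \ref{thm2}, substituting the non-commuting pair $\Phi(A^{-1})\Phi(A)$, $\Phi(A)\Phi(A^{-1})$ by $\Gamma$ and $\Gamma^{*}$, and replacing the Kantorovich constant $\frac{(M+m)^{2}}{4Mm}$ by $t:=\left(\frac{M-m}{M+m}\right)^{2}$. Since I am permitted to assume Conjecture \ref{conjecture}, its conclusion hands me $\|\Gamma\|\le t$ for free; this is the only substantive input, and everything else is a symmetrization trick carried out through $2\times2$ block matrices. Note that $\Gamma$ is a product of the self-adjoint operators $\Phi(X^{*}AY)\Phi(Y^{*}AY)^{-1}\Phi(Y^{*}AX)$ and $\Phi(X^{*}AX)^{-1}$, which need not commute, so $\Gamma$ itself is not self-adjoint and the symmetrized quantity $\Gamma+\Gamma^{*}$ is what is being estimated.

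First I would invoke Lemma \ref{lem2}: the bound $\|\Gamma\|\le t$ is equivalent to the positivity of $\begin{bmatrix} tI & \Gamma \\ \Gamma^{*} & tI\end{bmatrix}$ on $\mathcal{H}\oplus\mathcal{H}$. Because the operator norm is adjoint-invariant, $\|\Gamma^{*}\|=\|\Gamma\|\le t$ as well, so Lemma \ref{lem2} applied to $\Gamma^{*}$ shows that the flipped matrix $\begin{bmatrix} tI & \Gamma^{*} \\ \Gamma & tI\end{bmatrix}$ is also positive. Adding these two positive operator matrices yields
\[\begin{bmatrix} 2tI & \Gamma+\Gamma^{*} \\ \Gamma+\Gamma^{*} & 2tI\end{bmatrix}\ge 0.\]

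Next I would read Lemma \ref{lem2} in the reverse direction, now applied to the self-adjoint off-diagonal entry $\Gamma+\Gamma^{*}$: the positivity just obtained is equivalent to $\|\Gamma+\Gamma^{*}\|\le 2t$, which is in turn equivalent to $|\Gamma+\Gamma^{*}|\le 2tI$; dividing by $2$ gives (\ref{op-wielandt4a}). Finally, since $\Gamma+\Gamma^{*}$ is self-adjoint, the elementary fact $X\le|X|$ for self-adjoint $X$ yields $\tfrac{1}{2}(\Gamma+\Gamma^{*})\le\tfrac{1}{2}|\Gamma+\Gamma^{*}|\le t$, which is exactly (\ref{op-wielandt4b}).

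I expect no genuine obstacle internal to this proposition: every difficulty has been exported into Conjecture \ref{conjecture}, whose conclusion $\|\Gamma\|\le t$ is precisely the hypothesis I am allowed to use. The only points requiring a little care are to use both directions of the chain of equivalences in Lemma \ref{lem2} (first to manufacture the two positive blocks, then to extract the norm bound from their sum), and to observe that passing from $\Gamma$ to $\Gamma^{*}$ leaves the norm unchanged so that the second block is indeed positive.
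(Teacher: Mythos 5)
Your proof is correct and is essentially the approach the paper intends: the paper gives no separate argument for this proposition, saying only that it follows the line of the proof of Theorem \ref{thm2}, which is exactly what you have carried out (two positive block matrices from Lemma \ref{lem2} applied to $\Gamma$ and $\Gamma^*$, summed, then read back through Lemma \ref{lem2}). The only cosmetic difference is that you deduce (\ref{op-wielandt4b}) from (\ref{op-wielandt4a}) via $X\le|X|$ for self-adjoint $X$, whereas Theorem \ref{thm2} invokes the maximal characterization of the geometric mean at the corresponding step --- an interchange the paper itself endorses in the remark following Theorem \ref{thm2}.
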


\section{Acknowledgement}
The author thanks J. C. Bourin and M. S. Moslehian for helpful comments on an earlier version of the manuscript. Comments from the referee are also gratefully acknowledged.

\vskip 10pt

\noindent

Minghua Lin

 Department of Applied Mathematics,

University of Waterloo,

 Waterloo, ON, N2L 3G1, Canada.

mlin87@ymail.com

\end{document}